\let\oldref\ref
\renewcommand{\ref}[1]{(\oldref{#1})}
\DeclareMathOperator*{\argmin}{arg\,min}
\newcommand{\interior}[1]{%
  {\kern0pt#1}^{\mathrm{o}}%
}
\title{Accelerated Optimization with Orthogonality Constraints\footnote{\MakeLowercase{This work previously appeared in the authors doctoral thesis}}}
\author{Jonathan W. Siegel\thanks{Department of Mathematics, Pennsylvania State University, University Park, PA 
  (\email{jus1949@psu.edu}, \email{jwsiegel2510@gmail.com}).}}
\begin{document}

\maketitle

\begin{abstract}
We develop a generalization of Nesterov's accelerated gradient descent method which is designed to deal with orthogonality constraints. To demonstrate the effectiveness of our method, we perform numerical experiments which demonstrate that the number of iterations scales with the square root of the condition number, and also compare with existing state-of-the-art \hl{quasi-Newton methods on the Stiefel manifold.} Our experiments show that our method outperforms existing state-of-the-art \hl{quasi-Newton} methods on some large, ill-conditioned problems.
\end{abstract}

\begin{keywords}Riemannian optimization, Stiefel manifold, accelerated gradient descent, eigenvector problems, electronic structure calculations\end{keywords}

\begin{AMS}65K05, 65N25, 90C30, 90C48\end{AMS}

\pagestyle{myheadings}
\thispagestyle{plain}
\markboth{TEX PRODUCTION}{USING SIAM'S \LaTeX\ MACROS}

\section{Introduction}
Optimization problems over the set of orthonormal matrices arise naturally in many scientific and engineering problems.
Most notably, eigenfunction and electronic structure calculations involve minimizing functions over the set of orthonormal matrices \cite{manifold_book, geometry, martin2004electronic, zhang2014gradient}.
In these applications, the objective functions are smooth but often ill-conditioned. There are also more recent applications which involve non-smooth
objectives, most notably the calculation of compressed modes \cite{ozolicnvs2013compressed}, which involve an $L^1$ penalization of
variational problems arising in physics.

In this paper, we consider optimization problems with orthogonality constraints, i.e. problems of the form
\begin{equation} \label{gen_prob}
    \argmin_{X^TX = I_k} f(X),
\end{equation}
where $X$ is an $n \times k$ matrix, $I_k$ is the identity matrix, and $f$ is a smooth function. The manifold of orthonormal matrices over which 
we are optimizing is referred to as the Stiefel manifold in the literature. Many methods have been proposed for solving \eqref{gen_prob}, including variants
of gradient descent, Newton's method, quasi-Newton methods, and non-linear conjugate gradient methods \cite{manifold_book,geometry,wen2013feasible,zhu2017riemannian,huang2015broyden}.
However, existing methods can suffer from slow convergence when the problem is ill-conditioned, by which we mean that the Hessian of $f$ at (or near) the minimizer is
ill-conditioned \cite{geometry}. Such problems are of particular interest, since they arise when doing electronic structure calculations, or when
solving non-smooth problems by smoothing the objective, for instance. Moreover, preconditioning such problems can be very difficult due to the
manifold constraint.

In an attempt to solve ill-conditioned problems more efficiently, we develop an extension of the well-known Nesterov's accelerated gradient descent algorithm \cite{nesterov1983method}
designed for optimizing functions on the Stiefel manifold. For the class of smooth, strongly convex functions on $\mathbb{R}^n$, accelerated gradient descent
obtains an asymptotically optimal iteration complexity of $O(\sqrt{\kappa})$, compared to $O(\kappa)$ for gradient descent with optimal step size selection (see \cite{bubeck2015convex}, section 3.7, here
$\kappa$ is the condition number of the problem). 
Our method extends this convergence behavior from $\mathbb{R}^n$ to the Stiefel manifold, thus providing an efficient method for solving ill-conditioned optimization problems
with orthogonality constraints.

Other work on accelerated gradient methods on manifolds includes \cite{liu2017accelerated} and \cite{zhang2018towards}. In 
\cite{liu2017accelerated} an accelerated gradient method on general manifolds is presented. However, their algorithm involves
solving a non-linear equation involving both the metric on the manifold and the objective function $f$. Unfortunately, solving this
equation is only feasible for the special type of model problem which they consider and cannot be generally applied to arbitrary
optimization problems on the Stiefel manifold. In \cite{zhang2018towards}, a theory is developed which shows that a certain type
of accelerated method can achieve accelerated convergence locally. However, their method involves calculating a geodesic logarithm
in every iteration and has not yet been implemented, although an iterative method for calculating the geodesic logarithm has
been developed in \cite{zimmermann2017matrix}. In constrast, our method only involves very simple linear algebra calculations
in each iteration and can be run efficiently on large problems.

The paper is organized as follows. In section 2, we briefly introduce the necessary notation and ideas from differential geometry. In section 3, we discuss accelerated gradient descent on $\mathbb{R}^n$. We recall results which are relevant
to our work. In section 4, we detail the design of our method. One of the key ingredients is an efficient procedure for performing approximate extrapolation and interpolation on the
manifold, which we believe could be useful in developing other optimization methods. In section 5, we show numerical results which provide evidence that our method achieves the
desired iteration complexity. Finally, in section 6, we present comparisons with other optimization methods on the Stiefel manifold. We show that our method outperforms existing state of the art methods on some large, ill-conditioned problems.

\section{Riemannian Manifolds}\label{manifolds-intro-section}
In this section, we briefly introduce the notation we will \hl{use} in the rest of the paper concerning the Stiefel manifold 
and differential geometry in general. We also collect some formulas for calculating on the Stiefel manifold which will be used
later. Some references for differential geometry include \cite{spivak1970comprehensive,lang2012fundamentals}
and for the geometry of the Stiefel manifold, see \cite{geometry}.

Let $M$ be a smooth manifold and $x\in M$. We denote the tangent space of $M$ at $x$ by $T_xM$ and the dual
tangent space by $(T_xM)^*$. We denote the tangent bundle of $M$by $TM$, and likewise the dual tangent bundle by $(TM)^*$.

Suppose $f$ is a $C^1$ function on $M$. We denote the derivative of $f$, by $\nabla f(x)\in (T_xM)^*$. \hl{Notice that the derivative of $f$ is naturally an element of the dual tangent space $(T_xM)^*$.}
If $M$ is a Riemannian manifold, then each tangent space $T_xM$ is equipped with a positive definite 
inner product $g:T_xM \times T_xM \rightarrow \mathbb{R}$. We denote the norm induced by $g$ as
$\|v\|^2_g$ and the norm induced by $g$ on the dual space as
$\|w\|_{g*}$.

Additionally, the inner product $g$ provides an isometry between the tangent space and its dual, which we denote by
$\phi_g:(T_xM)^*\rightarrow T_xM$ and $\phi^{-1}_g:T_xM\rightarrow (T_xM)^*$, and which are also known as raising
and lowering indices. \hl{Given a $C^1$ function $f$ on a Riemannian manifold $M$, an object which is often considered is the Riemannian gradient, obtained by raising the indices of the gradient $\nabla f(x)\in (T_xM)^*$ to obtain a tangent vector (instead of a dual tangent vector). In this work, we will not work with the Riemannian gradient explicitly, but will rather work with the true gradient $\nabla f(x)\in (T_xM)^*$, which we consider to be more natural. Whenever we need to raise or lower indices this will be explicitly stated.}

Assuming that the geodesic equations on $M$ can be solved globally in time (which is true for the Stiefel manifold that we are interested in)
we denote the exponential map based at $x\in M$ by $\exp_x:T_xM\rightarrow M$.

Let $x\in M$ and $f\in C^2(M)$. We denote the Hessian of $f$ at $x$ (viewed as a quadratic form on $T_xM$) by
\begin{equation}
    Hf(x)(v) = \left.\frac{d^2f(\exp_x(tv))}{dt^2}\right|_{t = 0}.
\end{equation}
The condition number of $Hf(x)$ will be important in what follows and is given by
\begin{equation}\label{Hessian_ratio}
    \kappa(Hf(x)) = \frac{\sup_{\|v\|_{g} = 1} Hf(x)(v)}{\inf_{\|v\|_{g} = 1} Hf(x)(v)}.
\end{equation}

In Riemannian optimization, geodesics are often expensive to compute exactly, which leads to the concept of a retraction.
A retraction associates to each point and tangent vector a curve on the manifold which approximates a geodesic to (at least) first order.
This means that if we only move a small distance, a retraction will be very close to the geodesic, even though a retraction can be vastly
different globally (see \cite{manifold_book} for examples and more discussion).
\begin{definition}
Let $M$ be a (smooth) manifold. A retraction on $M$ is a (smooth) map $R:TM\rightarrow M$ (here $TM$ denotes the
tangent bundle of $M$) satisfying for all $x\in M$ and $v\in T_xM$
\begin{equation}
    R(x,0) = x
\end{equation}
\begin{equation}
    \left.\frac{d}{dt}\right|_{t = 0} R(x, tv) = v
\end{equation}
(Here I write $R(x,v)$ for the image of the point $(x,v)\in TM$ under $R$.)
\end{definition}
We proceed to collect explicit formulas for each of these quantities on the Stiefel manifold.

\subsection{The Stiefel Manifold}
The Stiefel manifold $S_{n,k}$ is the set of $n\times k$ orthonormal matrices, i.e. 
$$S_{n,k} = \{X\in \mathbb{R}^{n\times k}: X^TX = I_k\}.$$
There are two metrics commonly put on the Stiefel manifold in the literature. One is obtained by
viewing \hl{$S_{n,k}\subset \mathbb{R}^{n\times k}$} and considering the metric induced by the ambient space \hl{$\mathbb{R}^{n\times k}$}.
The other, called the canonical metric and which we will be considering for the remainder of this paper, is obtained by viewing 
$S_{n,k} = O(n)/O(n-k)$ as the quotient of the orthogonal group $O(n)$ by the right action of
$O(n-k)$. Specifically, the action is given by right multiplication by
\begin{equation}
\begin{bmatrix}
I_{k\times k} & 0_{k\times n} \\
0_{n\times k} & O_{(n-k)\times (n-k)}
\end{bmatrix},
\end{equation}
where $O_{(n-k)\times (n-k)}\in O(n-k)$. This induces a quotient metric on $S_{n,k}$. For more details on the former metric and the differences between
these two viewpoints, see \cite{geometry}.

We fix the following representation of the elements of $S_{n,k}$, its tangent and
dual tangent spaces. The elements of $S_{n,k}$ will be represented by $n\times k$ orthonormal matrices (even though our metric is 
induced by viewing $S_{n,k}$ as a quotient $O(n)/O(n-k)$).

The tangent space at a point $X\in S_{n,k}$ is identified with the
set $T_X = \{V\in \mathbb{R}^{n\times k}:V^TX + X^TV = 0\}$.

We represent the dual space by
$(T_X)^* = \{W\in \mathbb{R}^{n\times k}:W^TX + X^TW = 0\}$, with the pairing between $T_X$ and $(T_X)^*$ given by $\langle V,W\rangle = \text{Tr}(V^TW)$ (i.e. the usual inner product on \hl{$\mathbb{R}^{n\times k}$}). Utilizing a slight abuse of notation, 
we can also interpret any matrix
\hl{$W\in \mathbb{R}^{n\times k}$} as an element of the dual space using this inner product. This corresponds to identifying the
matrix with its orthogonal projection onto the dual space $(T_X)^*$.

Using these representations, the quotient metric on $S_{n,k}$ is given by (see \cite{geometry})
\begin{equation} \label{inner_product_stiefel} 
    g(Y,Z) = \text{Tr}\left(Y^T\left(I - \frac{1}{2}XX^T\right) Z\right)
\end{equation}
and the inner product on the dual space is given by
\begin{equation}
    g^*(Y,Z) = \text{Tr}\left(Y^T\left(I + XX^T\right)Z\right).
\end{equation}

The maps corresponding to raising and lowering the indices are
\begin{equation}
    \phi_g(W) = \left(I + XX^T\right)W
\end{equation}
and
\begin{equation}
    \phi_g^{-1}(V) = \left(I - \frac{1}{2}XX^T\right)V.
\end{equation}

The advantage of using canonical metric, i.e. the metric induced by the quotient structure of $S_{n,k}$, is that geodesics can be computed
using the matrix exponential. In fact, the constant-speed geodesic starting at $X\in S_{n,k}$ and moving initially in the direction
$V\in T_XS_{n,k}$ is given by (see \cite{geometry}, \hl{equation 2.42, details in appendix} \ref{derivation-geodesics})
\begin{equation} \label{stiefel_exact_geodesics}
    X(t) = \exp{\left(t(VX^T - XV^T + XV^TXX^T)\right)}X.
\end{equation}
If we give our direction via a dual vector $W$ and raise indices first, we obtain the simpler expression
\begin{equation}
    X(t) = \exp{\left(t(WX^T - XW^T)\right)}X.
\end{equation}
This formula can be applied to any \hl{$W\in \mathbb{R}^{n\times k}$}. This is equivalent to viewing $W$ as an
element of the dual tangent space via the Frobenius inner product, i.e. orthogonally projecting $W$ onto the
dual tangent space. Notice, however, that we do not need to perform this projection explicitly, we can simply
insert $W$ into the above formula (see \cite{geometry,wen2013feasible}).
The matrix $WX^T - XW^T$ has rank \hl{at most} $2k$, and so this exponential can be calculated by diagonalizing a $2k\times 2k$
antisymmetric matrix, as shown in \cite{geometry}.

In \cite{wen2013feasible}, a different retraction is introduced, which can be viewed as a Pad\'e approximation of the
above exponential. Their retraction, called the Cayley retraction, is defined on a dual vector (or any arbitrary
\hl{$W\in \mathbb{R}^{n\times k}$} viewed as an element of the dual tangent space as mentioned above) by the formula
\begin{equation}
    R_C(X, \phi_g(W)) = \left(I - \frac{1}{2}(WX^T - X^TW)\right)^{-1}\left(I + \frac{1}{2}(WX^T - X^TW)\right)X,
\end{equation}
which can be calculated using the Sherman-Morrison-Woodbury formula \cite{sherman1950adjustment} as (with $U = [\frac{1}{2}W, X]$ and $Z = [X, -\frac{1}{2}W]$)
\begin{equation} \label{R1_retraction}
    R_1(X, \phi_g(W)) = X + 2U(I - Z^TU)^{-1}Z^TX.
\end{equation}
This retraction avoids the need to calculate a matrix exponential and reduces the number of floating point
operations required by a significant constant factor over the geodesic retraction (although both retractions
have the same asymptotic complexity, see \cite{wen2013feasible}).

\section{Accelerated Gradient Descent}
Let $f:\mathbb{R}^n\rightarrow \mathbb{R}$ be a differentiable convex function. We say that $f$ is $\mu$-strongly convex if
\begin{equation} \label{strong_convexity}
    f(y) \geq f(x) + \langle\nabla f(x), y - x\rangle + \frac{\mu}{2}\|x - y\|_2^2.
\end{equation}
We also say that $f$ is $L$-smooth if 
\begin{equation}
    f(y) \leq f(x) + \langle\nabla f(x), y - x\rangle + \frac{L}{2}\|x - y\|_2^2.
\end{equation}
One way of thinking about these definitions is that $\mu$-strong convexity implies that the eigenvalues of the Hessian
of $f$ at every point are greater than $\mu$ and $L$-smoothness implies that the eigenvalues are less than $L$.

In his seminal paper \cite{nesterov1983method}, Nesterov introduced first-order methods which achieves the asymptotically optimal 
objective error for the class of $L$-smooth convex functions and for the class of $L$-smooth and $\mu$-strongly convex functions. 
These methods take the form
\begin{equation}\label{accel_iter}
    x_0 = y_0,~x_{t+1} = y_t - \gamma_t \nabla f(y_t),~y_{t+1} = x_{t+1} + \alpha_t(x_{t+1} - x_t).
\end{equation}
The choice of $\gamma_t$ and $\alpha_t$ depend on whether the function $f$ is strongly convex (as opposed to only convex and
$L$-smooth), and also on the precise parameters $\mu$ and $L$.

If $f$ is $\mu$-strongly convex and $L$-smooth, then setting $\alpha_t = \frac{\sqrt{L} - \sqrt{\mu}}{\sqrt{L} + \sqrt{\mu}}$
and $\gamma_t = 1/L$ produces the asymptotically optimal objective error of $O((1 - \sqrt{\frac{\mu}{L}})^{-t})$ (compared with $O((1 - \frac{\mu}{L})^{-t})$ for
gradient descent), as the following theorem shows.
\begin{theorem}\label{strongly_convex_accelerated_convergence}
Assume that $f$ is $\mu$-strongly convex and $L$-smooth. Let $x^*$ be the minimizer of $f$. If we let $\alpha_t = \frac{\sqrt{L} - \sqrt{\mu}}{\sqrt{L} + \sqrt{\mu}}$
and $\gamma_t = 1/L$ in \eqref{accel_iter}, then we have that
\begin{equation}
    f(x_t) - f(x^*) \leq 2\left(1 - \sqrt{\frac{\mu}{L}}\right)^t(f(x_0) - f(x^*)).
\end{equation}
\end{theorem}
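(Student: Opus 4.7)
The plan is to establish the geometric decay via a Lyapunov (potential) function argument. I will introduce an auxiliary ``momentum'' sequence $z_t$ and study the quantity
\[
\Phi_t = (f(x_t) - f(x^*)) + \frac{\mu}{2}\|z_t - x^*\|_2^2,
\]
aiming to show $\Phi_{t+1} \leq (1 - \sqrt{\mu/L})\,\Phi_t$ at each step. Iterating yields geometric decay of $\Phi_t$, and since $f(x_t) - f(x^*) \leq \Phi_t$ on the one hand, and on the other strong convexity at $x_0$ (combined with $\nabla f(x^*) = 0$) gives $\tfrac{\mu}{2}\|x_0 - x^*\|_2^2 \leq f(x_0) - f(x^*)$, so that setting $z_0 = x_0 = y_0$ yields $\Phi_0 \leq 2(f(x_0) - f(x^*))$, the factor of $2$ in the theorem statement appears.

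First, I would rewrite \ref{accel_iter} in an equivalent three-sequence form by introducing $z_t$ chosen so that $y_t$ is a convex combination of $x_t$ and $z_t$, specifically with weight $\sqrt{\mu/L}$ on $z_t$, and so that the update $z_{t+1}$ arises from $z_t$ by a suitably scaled gradient step at $y_t$. This rewriting is algebraically equivalent to the two-sequence form but exposes the structure needed for the potential analysis. Second, I would apply $L$-smoothness at $y_t$ to obtain the standard one-step descent inequality
\[
f(x_{t+1}) \leq f(y_t) - \frac{1}{2L}\|\nabla f(y_t)\|_2^2.
\]
Third, I would invoke $\mu$-strong convexity of $f$ at $y_t$, evaluated against both $x^*$ and $x_t$, to obtain
\[
f(y_t) - f(x^*) \leq \langle \nabla f(y_t), y_t - x^*\rangle - \frac{\mu}{2}\|y_t - x^*\|_2^2,
\]
and analogously for $f(y_t) - f(x_t)$. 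Combining these two bounds with the convex weights $\sqrt{\mu/L}$ and $1 - \sqrt{\mu/L}$, and substituting into the smoothness estimate, is designed to produce the target recursion for $\Phi_t$.

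The main obstacle will be the algebraic bookkeeping in the combined inequality: one must verify that the cross term $\langle \nabla f(y_t), z_t - x^*\rangle$ and the quadratic $\|\nabla f(y_t)\|_2^2$ assemble so that the right-hand side is exactly $(1 - \sqrt{\mu/L})\Phi_t$ with the correct $\tfrac{\mu}{2}\|z_{t+1} - x^*\|_2^2$ term appearing naturally. This is precisely where the specific choices $\gamma_t = 1/L$ and $\alpha_t = (\sqrt{L} - \sqrt{\mu})/(\sqrt{L} + \sqrt{\mu})$ are forced; they are essentially the unique parameters making the linear and quadratic terms in $\nabla f(y_t)$ cancel to produce geometric rate $1 - \sqrt{\mu/L}$. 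Once this one-step recursion is checked, the proof is completed by simple iteration followed by the initialization bound $\Phi_0 \leq 2(f(x_0) - f(x^*))$ noted above.
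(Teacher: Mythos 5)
Your proposal is correct, and it is worth noting that the paper does not actually prove this theorem --- its ``proof'' is a pointer to Section~3.7 of Bubeck's monograph, where the result is established via Nesterov's estimate-sequence technique (a recursively defined family of quadratic lower models $\Phi_s(\cdot)$ whose minimum dominates $f(y_s)$). Your Lyapunov argument with $\Phi_t = (f(x_t)-f(x^*)) + \tfrac{\mu}{2}\|z_t-x^*\|_2^2$ is the modern, equivalent repackaging of that argument: the auxiliary point $z_t$ is precisely the minimizer of the estimate-sequence quadratic, and evaluating the estimate sequence at $x^*$ recovers your potential. Your route has two concrete advantages here. First, eliminating $z_t$ from the three-sequence form $y_{t+1} = (x_{t+1} + \tau z_{t+1})/(1+\tau)$ with $\tau=\sqrt{\mu/L}$ reproduces exactly the two-sequence iteration \ref{accel_iter} with $\alpha_t = (1-\tau)/(1+\tau) = (\sqrt{L}-\sqrt{\mu})/(\sqrt{L}+\sqrt{\mu})$, so the parameters in the theorem are recovered rather than assumed. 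Second, the initialization $z_0=x_0=y_0$ together with $\tfrac{\mu}{2}\|x_0-x^*\|_2^2 \le f(x_0)-f(x^*)$ gives $\Phi_0 \le 2(f(x_0)-f(x^*))$, which yields the stated constant $2$ and the right-hand side in terms of the initial objective gap; Bubeck's form of the bound is stated instead in terms of $\|x_0-x^*\|_2^2$ and would need an extra smoothness step to be converted. The one-step inequality $\Phi_{t+1}\le(1-\tau)\Phi_t$ that you defer to ``algebraic bookkeeping'' is standard and does go through: combine the descent estimate $f(x_{t+1})\le f(y_t)-\tfrac{1}{2L}\|\nabla f(y_t)\|_2^2$ with the strong-convexity bound at $y_t$ against $x^*$ (weight $\tau$) and the convexity bound against $x_t$ (weight $1-\tau$), then complete the square in $z_{t+1}-x^*$. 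No gap.
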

\begin{proof}
See, for instance, section 3.7 in \cite{bubeck2015convex}.
\end{proof}

One disadvantage of the method analyzed in Theorem \ref{strongly_convex_accelerated_convergence} is
that setting the proper step size and momentum parameter requires knowing the smoothness parameter $L$ and the strong convexity
parameter $\alpha$.

The optimal method for $L$-smooth functions is more flexible. In particular, no knowledge about the smoothness parameter
is needed. One can use a line search to determine the correct step size and still obtain the optimal objective error of $O(t^{-2})$ (compared with $O(t^{-1})$ for
gradient descent). In particular, we have the 
following result (which generalizes the results in \cite{su2016differential} to obtain a larger family of accelerated schemes).

\begin{theorem} \label{accelerated_convergence}
Assume that $f$ is convex and differentiable with minimizer $x^*$.

Let $q_t$ be any sequence of non-negative real numbers satisfying 
$q_0 = 0$ and $(q_{t + 1} + 1)^2 \leq (q_t + 2)^2 + 1$ (in particular $q_{t+1} \leq q_t + 1$ works).

Then, if in iteration \eqref{accel_iter}, $\gamma_t$ is chosen so that $\gamma_t \leq \gamma_{t-1}$ and $f(x_{t+1}) \leq f(y_t) - (\gamma_t/2)\|\nabla f(y_t)\|_2^2$, and 
$\alpha_t = \frac{q_t}{2 + q_{t+1}}$, we have
\begin{equation}
    f(x_t) - f(x^*) \leq 2(\gamma_tq_t(q_t + 2))^{-1}\|x_0 - x^*\|_2^2.
\end{equation}
\end{theorem}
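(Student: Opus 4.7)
The plan is to construct a Lyapunov (or ``potential'') function $E_t$ which is non-increasing along the iteration, has the form $E_t = \gamma_t q_t(q_t+2)(f(x_t)-f(x^*))+2\|z_t-x^*\|^2$ for a suitable auxiliary sequence $z_t$, and initializes to $2\|x_0-x^*\|^2$. From $E_t \le E_0$, the claimed bound will follow immediately.

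The first step is to introduce the right auxiliary sequence $z_t$. Motivated by writing $y_{t+1}$ as a convex combination, $y_{t+1}=\tfrac{q_{t+1}}{q_{t+1}+2}x_{t+1}+\tfrac{2}{q_{t+1}+2}z_{t+1}$, and matching this with the prescribed momentum update $y_{t+1}=x_{t+1}+\alpha_t(x_{t+1}-x_t)$ using $\alpha_t=q_t/(q_{t+1}+2)$, I would define
\begin{equation*}
  z_{t+1} = x_{t+1} + \tfrac{q_t}{2}(x_{t+1}-x_t), \qquad z_0 = x_0.
\end{equation*}
Substituting the gradient step $x_{t+1}=y_t-\gamma_t\nabla f(y_t)$ together with the identity $y_t=\tfrac{q_t}{q_t+2}x_t+\tfrac{2}{q_t+2}z_t$, a short calculation should yield the clean recursion $z_{t+1}=z_t-\tfrac{\gamma_t(q_t+2)}{2}\nabla f(y_t)$. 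This is the crucial structural identity: the momentum iterates can be re-expressed as plain gradient steps on $z_t$ with an inflated step size.

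Next I would expand $\|z_{t+1}-x^*\|^2$ using this recursion, producing an inner-product term $-\gamma_t(q_t+2)\langle\nabla f(y_t),z_t-x^*\rangle$ and a squared-norm term $\tfrac{\gamma_t^2(q_t+2)^2}{4}\|\nabla f(y_t)\|^2$. I would then use the decomposition $z_t-x^* = \tfrac{q_t+2}{2}(y_t-x^*)-\tfrac{q_t}{2}(x_t-x^*)$ to split the inner product and apply convexity, $\langle\nabla f(y_t),y_t-x^*\rangle\ge f(y_t)-f(x^*)$ and $\langle\nabla f(y_t),y_t-x_t\rangle\ge f(y_t)-f(x_t)$, followed by the assumed descent inequality $f(y_t)-f(x_{t+1})\ge(\gamma_t/2)\|\nabla f(y_t)\|^2$. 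The squared-norm term is designed to cancel exactly (the coefficient $(q_t+2)(1+q_t/2)=(q_t+2)^2/2$ is where this cancellation happens), leaving
\begin{equation*}
  2\|z_{t+1}-x^*\|^2 + \gamma_t(q_t+2)^2(f(x_{t+1})-f(x^*)) \;\le\; 2\|z_t-x^*\|^2 + \gamma_t q_t(q_t+2)(f(x_t)-f(x^*)).
\end{equation*}

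Finally, the hypothesis $(q_{t+1}+1)^2\le(q_t+2)^2+1$ rearranges to $q_{t+1}(q_{t+1}+2)\le(q_t+2)^2$, and combined with $\gamma_{t+1}\le\gamma_t$ this gives $\gamma_{t+1}q_{t+1}(q_{t+1}+2)\le\gamma_t(q_t+2)^2$, so the inequality above implies $E_{t+1}\le E_t$. Since $q_0=0$ forces $z_0=x_0$ and the $f$-term in $E_0$ vanishes, we obtain $\gamma_t q_t(q_t+2)(f(x_t)-f(x^*))\le E_t\le E_0=2\|x_0-x^*\|^2$, which is the desired bound. The main technical obstacle is guessing the correct $z_t$ and the correct weights in $E_t$; once those are in place the algebra is essentially forced by the requirement that the $\|\nabla f(y_t)\|^2$ contributions cancel.
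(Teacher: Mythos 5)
Your proposal is correct and follows essentially the same route as the paper's proof: your potential $E_t$ with $z_t = x^* + \tfrac{1}{2}\bigl(2(y_t-x^*)+q_t(y_t-x_t)\bigr)$ is exactly the paper's Lyapunov function $J_t$, your recursion $z_{t+1}=z_t-\tfrac{\gamma_t(q_t+2)}{2}\nabla f(y_t)$ is the paper's identity $s_{t+1}-s_t=-\gamma_t(q_t+2)\nabla f(y_t)$, and the remaining steps (convexity, the descent condition, and $q_{t+1}(q_{t+1}+2)\le(q_t+2)^2$ with $\gamma_{t+1}\le\gamma_t$) are used identically. The only difference is cosmetic: you phrase the quadratic term via the auxiliary point $z_t$ rather than the vector $s_t$, which makes the ``inflated gradient step'' interpretation more transparent but changes nothing in the argument.
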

\begin{proof}
See the appendix \ref{proof-accelerated-convergence}.
\end{proof}
Note that in the above theorem we made no assumption that $f$ was $L$-smooth. This emphasizes that the method is independent of the particular value of $L$. We choose the step size $\gamma_t$ to provide a sufficient decrease in the objective. Such a $\gamma_t$ can be found using a line search and will be about $1/L$ in the worst case (within a constant depending on the precise line search scheme). 

Also, setting $q_t = \alpha t$ and $\gamma_t = 1/L$ for $\alpha \leq 1$ recovers the result from \cite{su2016differential} (with $r = 1 + 2/\alpha$). In particular, the special case $\alpha = 1$ gives $f(x_t) - f(x^*) \leq 2Lt^{-2}\|x_0 - x^*\|_2^2$.

\subsection{Adaptive Restart for Strongly Convex Functions}
\hl{It is often the case in practice that an objective is strongly convex and smooth, but the strong convexity parameter $\mu$ and the
smoothness parameter $L$ are unknown. This creates a problem because the correct momentum and step size in Theorem} \ref{strongly_convex_accelerated_convergence} \hl{requires knowledge of the strong convexity and smoothness parameters. Many researchers have proposed methods for adaptively estimating the parameters $\mu$ and $L$ (see, for instance,} \cite{nesterov2007gradient}, \cite{lin2014adaptive} and \cite{o2015adaptive}).

\hl{These ideas are relevant to our work because we will in general not have knowledge of the local (i.e. near the minimizer) smoothness and strong convexity parameters for objectives on the Stiefel manifold, and we build upon the work presented in} \cite{o2015adaptive}. \hl{The methods introduced there are based on the following observation.}

Suppose we are given a $\mu$-strongly convex, $L$-smooth function $f$. Then since $f$ is convex and $L$-smooth, we can run iteration \eqref{accel_iter} with
the parameters given in Theorem \ref{accelerated_convergence} (setting $q_t = t$) and obtain the following objective error
\begin{equation}\label{accel_bound_non_strongly}
    f(x_t) - f(x^*) \leq 2Lt^{-2}\|x_0 - x^*\|_2^2.
\end{equation}
The strong convexity of $f$ now allows us to bound the iterate error by the objective error, since strong convexity implies that $(\mu/2)\|x_t - x^*\|_2^2 \leq f(x_t) - f(x^*)$.
Combining this with equation \eqref{accel_bound_non_strongly} we see that
\begin{equation}
    \|x_t - x^*\|_2^2 \leq 4(L/\mu)t^{-2}\|x_0 - x^*\|_2^2 = 4\kappa t^{-2}\|x_0 - x^*\|_2^2.
\end{equation}
where $\kappa = (L/\mu)$ is the condition number of $f$. This implies that after $t = \sqrt{8\kappa}$ iterations,
we will have
\begin{equation}
    \|x_t - x^*\|_2^2 \leq \frac{\|x_0 - x^*\|_2^2}{2}.
\end{equation}
So by restarting the method (i.e. setting $x_0 = x_t$ and resetting the momentum parameter) every $\sqrt{8\kappa}$ iterations,
we halve the iterate error every time we restart. This means that it takes $O(\sqrt{\kappa}\log(\epsilon))$ iterations to attain
an $\epsilon$-accurate solution and thus restarting the method at this frequency recovers the asymptotically optimal convergence rate (for
$\mu$-strongly convex $L$-smooth functions).

Of course, in order to apply this scheme, we must know the condition number $\kappa$ in order to determine the correct restart frequency. To get around this, the
method proposed in \cite{o2015adaptive} adaptively chooses when to restart based on an observable condition on the iterates. Specifically, they consider two
restart conditions
\begin{itemize}
    \item Function Restart Scheme: Restart when $f(x_t) > f(x_{t-1})$
    \item Gradient Restart Scheme: Restart when $\nabla f(y_{t-1})\cdot (x_t - x_{t-1}) > 0$
\end{itemize}

Both of these restart conditions are based upon the analysis of a quadratic objective and it is an open problem to fully analyze their
behavior when applied to an arbitrary strongly convex, smooth function. However, experimental results in \cite{o2015adaptive} show empirically
that the adaptively restarted methods perform well in practice.

\section{Acceleration on the Stiefel Manifold}
We now come to the heart of the paper. In this section, we develop a version of Nesterov's accelerated gradient descent which is
designed for efficiently optimizing functions on the Stiefel manifold. We do this by generalizing the adaptively restarted methods of the previous section to the Stiefel manifold. \hl{There are two main difficulties which we must overcome in doing this, the first of which results from the non-convexity of our objectives and is thus not specific to optimization on the Stiefel manifold.}

Our first difficulty is that the functions which we will be minimizing are non-convex. This is due to the fact that all globally convex functions on
the Stiefel manifold are constant \cite{yau1974non} (since the manifold is compact). Because of this, we cannot hope to obtain a global convergence rate.
Our goal is to construct a method which is guaranteed to converge and which will achieve an accelerated rate once it is close enough to the
(local) minimizer. \hl{Notice that this difficulty applies equally to non-convex optimization problems on other manifolds, including Euclidean space $\mathbb{R}^n$.}

The second difficulty, \hl{which is specific to the Stiefel manifold,} is that we must find an efficient way of generalizing the momentum step $$y_{t+1} = x_{t+1} + \alpha_t(x_{t+1} - x_t)$$ of \eqref{accel_iter}
to the manifold. We will develop a very efficient method for averaging and extrapolating on the Stiefel manifold, which can potentially be used to design a variety of other optimization methods as well.

In the following subsections, we will describe how each of these difficulties is overcome.

\subsection{Restart for Non-convex Functions}
When adapting accelerated gradient methods to the Stiefel manifold, we are faced with the issue that the manifold
is compact and so the only convex functions are constant. Consequently, the functions which we are optimizing are necessarily 
non-convex. In this case the convergence results of Theorems \ref{strongly_convex_accelerated_convergence} and \ref{accelerated_convergence} 
can not be applied and in fact we cannot hope for
a `global' convergence rate.

Instead, what we note is that in a small neighborhood of a local optimum $X^*$ the function will be strongly convex and
smooth, provided that the Hessian at $X^*$ is positive definite. Moreover, the ratio of the strong convexity and smoothness parameters
in this neighborhood will be the close to the condition number of $\nabla^2 f(X^*)$, which we denote by $\kappa(X^*)$.

Thus the accelerated gradient method analyzed in Theorem \ref{strongly_convex_accelerated_convergence} suggests that we should be able to find a method which achieves a convergence
rate of $O((1 - \kappa(X^*)^{-1/2})^t)$ once it is close enough to the local minimum $X^*$. But since we have to deal with
functions which are not globally convex, we hope to design a method which is
guaranteed to converge to a local minimum even for non-convex functions, and which achieves the optimal convergence rate
once it is close enough to the local minimum.

Our approach is to modify the function restart scheme considered in \cite{o2015adaptive} and described in the previous section. 
We introduce the following restart condition, which forces a sufficient decrease in the objective.
\begin{itemize}
    \item {Modified Function Restart Scheme: Restart when
    \begin{equation}\label{restart}
        f(x_{t+1}) > f(x_{t}) - c_R\gamma_t\|\nabla f(y_{t})\|^2,
    \end{equation}
    where $c_R$ is a parameter we take to be a small constant (recall that $\gamma_t$ is the step size at step $t$).}
\end{itemize}
The norm above is the norm of the gradient in the dual tangent space if we are on a Riemannian manifold and is taken to be the
Euclidean norm if we are considering $\mathbb{R}^n$. We will apply this condition to optimization problems on the Stiefel
manifold, but we analyze its convergence for non-convex functions on Euclidean space, 
\hl{a situation in which it applies equally well.}

\begin{theorem}
Let $f$ be a differentiable, $L$-smooth function on $\mathbb{R}^n$, i.e. 
$\nabla f$ is Lipschitz with constant $L$. Assume also that $f$ is bounded
below.

Consider the iteration \eqref{accel_iter} with step size $\gamma_t$ chosen to satisfy $c/L \leq \gamma_t \leq \gamma_{t-1}$ for some $c \leq 1$ 
and $f(x_{t+1}) \leq f(y_t) - (\gamma_t/2)\|\nabla f(y_t)\|_2^2$ (which can be done by choosing $\gamma_t\leq \frac{1}{L}$).

If this iteration is restarted whenever \eqref{restart} holds (with the new $\gamma_0$ chosen to be $\leq\gamma_t$), then we have
\begin{equation}
    \lim_{t\rightarrow \infty} \|\nabla f(x_t)\| \rightarrow 0.
\end{equation}
\end{theorem}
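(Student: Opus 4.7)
The plan is to prove the stronger statement $\sum_t \gamma_t \|\nabla f(y_t)\|^2 < \infty$; the theorem then follows, since the lower bound $\gamma_t \geq c/L$ forces $\|\nabla f(y_t)\| \to 0$, and the Lipschitz estimate $\|\nabla f(x_{t+1}) - \nabla f(y_t)\| \leq L\gamma_t\|\nabla f(y_t)\|$ together with $\gamma_t \leq \gamma_0$ then gives $\|\nabla f(x_t)\| \to 0$.

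To obtain the summability, I would classify each step as a restart step (a set $R$) or a non-restart step according to whether \eqref{restart} is triggered after computing $x_{t+1}$. The gradient half of the iteration yields the sufficient decrease $f(x_{t+1}) \leq f(y_t) - (\gamma_t/2)\|\nabla f(y_t)\|^2$ at every step, regardless of restart. Summing this over $t = 0, \ldots, T-1$ and telescoping using $y_0 = x_0$ gives
\begin{equation*}
\tfrac{1}{2}\sum_{t=0}^{T-1}\gamma_t\|\nabla f(y_t)\|^2 \;\leq\; \bigl(f(x_0) - \inf f\bigr) + \sum_{t=1}^{T-1}\bigl(f(y_t) - f(x_t)\bigr).
\end{equation*}
If $t-1 \in R$ then $y_t = x_t$ because the restart reset the momentum, and the cross term vanishes. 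Otherwise $y_t - x_t = -\alpha_{t-1}\gamma_{t-1}\nabla f(y_{t-1})$, and by expanding via $L$-smoothness and using the Lipschitz bound $\langle \nabla f(x_t), \nabla f(y_{t-1})\rangle \geq (1 - L\gamma_{t-1})\|\nabla f(y_{t-1})\|^2$, one derives an estimate of the form
\begin{equation*}
f(y_t) - f(x_t) \;\leq\; C\,\gamma_{t-1}\|\nabla f(y_{t-1})\|^2
\end{equation*}
for some explicit constant $C$. Substituting back produces $(\tfrac{1}{2} - C)\sum_t \gamma_t \|\nabla f(y_t)\|^2 \leq f(x_0) - \inf f$, which is the desired bound provided $C < 1/2$.

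The delicate point is obtaining $C$ strictly less than $\tfrac{1}{2}$. A naive Cauchy--Schwarz estimate on the cross term $\alpha_{t-1}\langle \nabla f(x_t), -\gamma_{t-1}\nabla f(y_{t-1})\rangle$ inflates $C$ beyond $1/2$ and breaks the argument; the key is that when $L\gamma_{t-1} \leq 1$ the inner product $\langle \nabla f(x_t), \nabla f(y_{t-1})\rangle$ is non-negative, so this first-order term is non-positive and can be discarded, leaving only the quadratic remainder $(L\gamma_{t-1}\alpha_{t-1}^2/2)\,\gamma_{t-1}\|\nabla f(y_{t-1})\|^2$. This gives $C = (L\gamma_0/2)\sup_t \alpha_t^2 \leq 1/2$, with strict inequality whenever $L\gamma_0 < 1$ or the momentum coefficients $\alpha_t$ are uniformly bounded below $1$. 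The restart mechanism plays its role here by resetting $\alpha$ to zero every time \eqref{restart} fires, preventing long uninterrupted acceleration blocks from letting $\alpha_t$ approach $1$; quantifying this to close the constant is the most technical piece of the argument.
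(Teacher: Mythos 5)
Your overall target --- establishing $\sum_t \gamma_t\|\nabla f(y_t)\|^2 < \infty$ and then transferring the conclusion from $y_t$ to $x_t$ via the Lipschitz bound $\|\nabla f(x_t)\| \leq (1+L\gamma_{t-1})\|\nabla f(y_{t-1})\|$ --- matches the paper, but the route you take to the summability has a genuine gap. First, the identity $y_t - x_t = -\alpha_{t-1}\gamma_{t-1}\nabla f(y_{t-1})$ is false except immediately after a restart. From \eqref{accel_iter} one has $y_t - x_t = \alpha_{t-1}(x_t - x_{t-1})$ and $x_t - x_{t-1} = (y_{t-1} - x_{t-1}) - \gamma_{t-1}\nabla f(y_{t-1})$, where $y_{t-1} - x_{t-1} = \alpha_{t-2}(x_{t-1}-x_{t-2})$ is itself a momentum term; unrolling, $y_t - x_t$ is a weighted sum of \emph{all} gradients since the last restart, with products of momentum coefficients as weights. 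So the cross term $f(y_t)-f(x_t)$ cannot be bounded by a single term $C\gamma_{t-1}\|\nabla f(y_{t-1})\|^2$, and since $\alpha_t \to 1$ these weights do not decay. Second, the repair you gesture at --- that restarting keeps $\alpha_t$ bounded away from $1$ --- is not what the restart does: condition \eqref{restart} is a test on function values, and nothing prevents arbitrarily long blocks without a restart during which $\alpha_t \to 1$. So the constant you need strictly below $1/2$ is not obtainable this way, and the piece you defer as "the most technical" is in fact where the argument breaks.

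The paper's proof avoids all of this by using the restart condition exactly where you do not. The observation is that the accepted iterate always satisfies
\begin{equation*}
  f(x_{t+1}) \leq f(x_t) - c_R\,\gamma_t\|\nabla f(y_t)\|_2^2 ,
\end{equation*}
for the simple reason that if this inequality fails, \eqref{restart} fires, the step is discarded, and it is redone as a pure gradient step from $y_t = x_t$, which satisfies the Armijo decrease $f(x_{t+1}) \leq f(x_t) - (\gamma_t/2)\|\nabla f(y_t)\|_2^2 \leq f(x_t) - c_R\gamma_t\|\nabla f(y_t)\|_2^2$ (recall $c_R$ is a small constant). Summing this telescoping inequality and using $f \geq M$ and $\gamma_t \geq c/L$ immediately gives $\sum_t \|\nabla f(y_t)\|_2^2 \leq L(f(x_0)-M)/(c_R c) < \infty$, with no need to control $f(y_t)-f(x_t)$ at all. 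I would encourage you to rework your argument so that the restart test enters as the source of monotone decrease of $f(x_t)$, rather than as an afterthought used to control the momentum coefficients.
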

\begin{proof}
Note first that our condition on the step size $\gamma_t$ can always be satisfied, since by the $L$-smoothness of $f$
we have that $\gamma_t = c/L$ will always work.

Also note that since $x_0 = y_0$, the condition on the step size always guarantees that $f(x_1) \leq f(x_0) - c_R\gamma_0\|\nabla f(y_0)\|_2^2$.

So we can always run the algorithm \eqref{accel_iter} in a way which satisfies the conditions of the theorem.

To complete the proof, we note that the restart condition combined with the observation that we always take at least one step implies that
\begin{equation}
    f(x_{t+1}) \leq f(x_{t}) - c_R\gamma_t\|\nabla f(y_{t})\|_2^2.
\end{equation}
Summing this, we obtain
\begin{equation}
    c_R\displaystyle\sum_{t = 0}^N \gamma_t\|\nabla f(y_{t})\|_2^2 \leq f(x_0) - f(x_N).
\end{equation}
Since $f$ is bounded below, say by $M$ and $\gamma_n \geq c/L$ we see that
\begin{equation}
    \displaystyle\sum_{t = 0}^\infty \|\nabla f(y_{t})\|_2^2 \leq \frac{L(f(x_0) - M)}{c_Rc} < \infty.
\end{equation}
This implies that $\|\nabla f(y_t)\|\rightarrow 0$. Now we simply note that since $f$ is $L$-smooth and
$x_t = y_{t - 1} - \gamma_t\nabla f(y_{t-1})$, we have that
\begin{equation}
    \|\nabla f(x_t)\|_2 \leq (1 + L\gamma_t)\|\nabla f(y_{t-1})\|_2 \leq (1 + L\gamma_0)\|\nabla f(y_{t-1})\|_2
\end{equation}
where the last inequality is because $\gamma_t \leq \gamma_0$ by assumption. Thus, $\|\nabla f(x_t)\|\rightarrow 0$ as desired.
\end{proof}

\subsection{Extrapolation and Interpolation on the Stiefel Manifold}
We have now seen how to get around knowing the strong convexity and smoothness parameters and how to deal
with non-convex functions in the process. We proceed to address the second difficulty mentioned at the beginning of the section.
Namely, we consider the problem of generalizing the momentum step of \eqref{accel_iter}
\begin{equation} \label{momentum_step}
    Y_{t+1} = X_{t+1} + \alpha_t(X_{t+1} - X_t)
\end{equation}
to the manifold setting. 

More generally, we will consider the problem of efficiently extrapolating and interpolating on the Stiefel manifold,
i.e. given two points $X,Y\in S_{n,k}$ and $\alpha \in \mathbb{R}$, we want to calculate points $(1-\alpha)X + \alpha Y$ on a curve through $X$ and $Y$. By
setting $\alpha \in (0,1)$ this gives a way of averaging points on the manifold and by setting $\alpha > 1$ or $\alpha < 0$
we can extrapolate as in \eqref{momentum_step}.

A possible approach would be to perform the extrapolation or interpolation in Euclidean space
and then project back onto the Stiefel manifold. However, this projection step, which consists of taking the orthonormal part from
the polar decomposition of the matrix, is more expensive than the method we propose, especially for large matrices. One could also
replace the projection by a reorthogonalization procedure such as Gram-Schmidt (or a QR factorization). However,
this is quite inaccurate if $k$ (the number of vectors) is large and is also not as cheap as our method, which only involves
matrix products and inversions (but no factorizations).

The approach we take is both simpler and more efficient. What we propose for generalizing 
\begin{equation}
    (1-\alpha)X + \alpha Y
\end{equation}
is to solve for a $V\in (T_{X}S_{n,k})^*$ which satisfies (here $R$ is a retraction which we have fixed in the course of designing our method,
we present the corresponding equations in Euclidean space on the right to clarify the method)
\begin{equation} \label{vector_solve}
    Y = R(X, \phi_g(V)),~Y = X + V
\end{equation}
and to then extrapolate or average by setting 
\begin{equation}
    (1-\alpha)X + \alpha Y = R(X, \phi_g(\alpha V)),~(1-\alpha)X + \alpha Y = X + \alpha V.
\end{equation}
Note that the use of $\phi_g$ simply allows us to work in the dual tangent space.

The obvious difficulty with this is solving equation \eqref{vector_solve} for $V$, i.e. finding a $V$
such that $R(X,\phi_g(V)) = Y$ for some given $X$ and $Y$. However, if we take our retraction to be $R_1$ from the previous section (this is the
Cayley retraction introduced in \cite{wen2013feasible}), then this boils down to solving
\begin{equation}\label{extrapolation-equation}
    \left(I + \frac{1}{2}(VX^T - XV^T)\right)X = \left(I - \frac{1}{2}(VX^T - XV^T)\right)Y
\end{equation}
for $V$. Since $X^TX = Y^TY = I$, one can now easily check that $V = 2Y(I + X^TY)^{-1}$ solves this equation. Here
we are viewing $V\in\mathbb{R}^{nk}$ as an element of the dual tangent space via the Frobenius inner product, as mentioned in section
\ref{manifolds-intro-section}. \hl{In addition, we can check that if we replace $V$ by $V^\prime = V + XS$ for any symmetric $k\times k$ matrix $S$, then $VX^T - XV^T = V^\prime X^T - X(V^\prime)^T$. This means that $V^\prime$ also satisfies equation} \eqref{extrapolation-equation}. \hl{In particular, we can replace $V$ by its orthogonal projection onto the dual tangent space $(T_X)^*$, which then gives us the desired vector.}

One potential issue with this approach is the possibility that the matrix $(I + X^TY)$ could be singular or ill-conditioned.
To address this issue, we have the following lemma showing that the matrix $(I + X^TY)$ is well-conditioned as 
long as $X$ and $Y$ are not too far apart on $S_{n,k}$. We do not explicitly check this condition in our algorithms, but 
we have not experienced any numerical issues with the inversion of $I + X^TY$ in our experiments.

\hl{Before giving the result, we introduce some notation. Let $X,Y\in S_{n,k}$. We write the geodesic distance between $X$ and $Y$ with respect to the quotient metric as}
\begin{equation}\label{quotient-definition}
    d_{S_{n,k}^Q}(X,Y)^2 = \inf_{C(t):[0,1]\rightarrow S_{n,k}}\int_{0}^1 \text{Tr}\left(C^\prime(t)\left(I - \frac{1}{2}C(t)C(t)^T\right)C^\prime(t)^T\right)dt
\end{equation}
\hl{where the infemum is taken over all paths $C(t)$ which connect $X$ and $Y$, i.e. for which $C(0) = X$ and $C(1) = Y$.
Similarly, we write the geodesic distance with respect to the embedding metric as}
\begin{equation}\label{embedding-definition}
    d_{S_{n,k}^E}(X,Y)^2 = \inf_{C(t):[0,1]\rightarrow S_{n,k}}\int_{0}^1 \text{Tr}\left(C^\prime(t)C^\prime(t)^T\right)dt,
\end{equation}
\hl{where the infemum is taken over the same set where $C(0) = X$ and $C(1) = Y$.}

We now have the following result.
\begin{lemma}
    Suppose that $X,Y\in S_{n,k}$ and $d_{S_{n,k}^Q}(X,Y) < \sqrt{3/2}$. Then we have
    \begin{equation}
        \kappa(I + X^TY) \leq 2\left(3 - 2d_{S_{n,k}^Q}^2(X,Y)\right)^{-\frac{1}{2}}.
    \end{equation}
    Here $d_{S_{n,k}^Q}$ is the distance on the Stiefel manifold with the quotient metric.
\end{lemma}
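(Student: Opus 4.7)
The plan is to bound $\sigma_{\max}(I + X^TY)$ and $\sigma_{\min}(I + X^TY)$ separately. The upper bound is immediate from the triangle inequality together with $\|X^TY\|_{\text{op}} \le 1$, giving $\sigma_{\max}(I + X^TY) \le 2$. The work lies in the lower bound. I would start by expanding the Gramian
\begin{equation*}
(I + X^TY)^T(I + X^TY) = I + (X^TY + Y^TX) + (X^TY)^T(X^TY),
\end{equation*}
and then apply the elementary identity
\begin{equation*}
X^TY + Y^TX = 2I - (X-Y)^T(X-Y),
\end{equation*}
which follows by expanding $(X-Y)^T(X-Y)$ and using $X^TX = Y^TY = I_k$.

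Dropping the remaining positive semidefinite term $(X^TY)^T(X^TY) \succeq 0$ yields
\begin{equation*}
(I + X^TY)^T(I + X^TY) \succeq 3I - (X-Y)^T(X-Y),
\end{equation*}
so $\sigma_{\min}(I + X^TY) \ge \sqrt{3 - \|X-Y\|_{\text{op}}^2} \ge \sqrt{3 - \|X-Y\|_F^2}$, provided the right-hand side is positive. Combined with the upper bound, this reduces the problem to controlling the ambient chord distance $\|X-Y\|_F$ by the intrinsic quotient distance $d_{S_{n,k}^Q}(X,Y)$:
\begin{equation*}
\kappa(I + X^TY) \le \frac{2}{\sqrt{3 - \|X-Y\|_F^2}}.
\end{equation*}

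The remaining step, which I expect to be the main obstacle since it is the only place the quotient metric enters substantively, is a path-length comparison. For any tangent vector $V \in T_X S_{n,k}$, the quotient inner product \eqref{inner_product_stiefel} satisfies
\begin{equation*}
\|V\|_g^2 = \|V\|_F^2 - \tfrac{1}{2}\|X^TV\|_F^2 \ge \tfrac{1}{2}\|V\|_F^2,
\end{equation*}
because $XX^T \preceq I$ forces $\|X^TV\|_F \le \|V\|_F$. Hence for any smooth curve $C(t)$ on $S_{n,k}$ with $C(0) = X$ and $C(1) = Y$, the Euclidean triangle inequality gives
\begin{equation*}
\|X - Y\|_F \le \int_0^1 \|C'(t)\|_F\, dt \le \sqrt{2}\int_0^1 \|C'(t)\|_g\, dt.
\end{equation*}
Taking the infimum over paths yields $\|X-Y\|_F^2 \le 2\, d_{S_{n,k}^Q}^2(X,Y)$. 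Substituting into the bound above gives the claim, and the hypothesis $d_{S_{n,k}^Q}(X,Y) < \sqrt{3/2}$ is precisely what is needed to keep $3 - 2\,d_{S_{n,k}^Q}^2(X,Y) > 0$ so that both the denominator and the intermediate $\sigma_{\min}$ estimate are strictly positive.
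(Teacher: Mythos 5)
Your proof is correct and follows essentially the same route as the paper's: the numerator bound $\sigma_{\max}\le 2$, the lower bound on $\sigma_{\min}$ via the identity $X^TY+Y^TX = 2I-(X-Y)^T(X-Y)$ (the paper writes this as the quadratic form $2\langle Xz,Yz\rangle = 2-\|(X-Y)z\|_2^2$ on unit vectors), and the comparison $\|X-Y\|_F^2\le 2\,d_{S_{n,k}^Q}^2(X,Y)$. The only difference is cosmetic --- you phrase the singular-value estimate in Loewner order and spell out the metric comparison that the paper delegates to \cite{geometry}.
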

\begin{proof}
We first note that
\begin{equation}\label{eq_589}
    \|X - Y\|_F \leq d_{S_{n,k}^E}(X,Y) \leq \sqrt{2}d_{S_{n,k}^Q}(X,Y),
\end{equation}
where $\|\cdot\|_F$ is the Frobenius norm. \hl{The
first inequality above is clear since the Frobenius norm is defined by the same infemum as in equation} \eqref{embedding-definition}, \hl{but without the restriction that the path $C(t)$ needs to lie on the Stiefel manifold. The second inequality follows trivially from equations} \eqref{quotient-definition} and \eqref{embedding-definition}, as noted in
\cite{geometry}, sections 2.2.1 and 2.3.1.

Now consider the condition number of the matrix $I + X^TY$
\begin{equation}\label{condition_number_bound}
    \kappa(I + X^TY) = \frac{\sup_{\|z\|_2 = 1} \|(I + X^TY)z\|_2}{\inf_{\|z\|_2 = 1} \|(I + X^TY)z\|_2}.
\end{equation}
Since $X$ and $Y$ are orthonormal, we clearly have $\|X^TYz\|_2 \leq 1$ and so the numerator above is bounded by $2$.
For the denominator, we note that
\begin{equation}\label{eq_603}
    \|(I + X^TY)z\|^2_2 = 1 + 2\langle Xz, Yz\rangle + \|X^TYz\|_2^2 \geq 1 + 2\langle Xz, Yz\rangle.
\end{equation}
We combine this with the fact that
\begin{equation}
    2\langle Xz, Yz\rangle = \|Xz\|_2^2 + \|Yz\|_2^2 - \|(X-Y)z\|_2^2.
\end{equation}
Since $X$ and $Y$ are orthonormal and $\|z\|_2 = 1$, we have that $\|Xz\|_2^2 = \|Yz\|_2^2 = 1$. Additionally, since the 
Frobenius norm bounds the operator norm, we have that $\|(X-Y)z\|_2^2 \leq \|X-Y\|_F^2$. Plugging this into equation
\eqref{eq_603} we see that
\begin{equation}
    \|(I + X^TY)z\|_2 \geq \sqrt{3 - \|X-Y\|_F^2}.
\end{equation}
Utilizing equation \eqref{eq_589} and \eqref{condition_number_bound}, we finally get
\begin{equation}
    \kappa(I + X^TY) \leq 2\left(3 - 2d_{S_{n,k}^Q}^2(X,Y)\right)^{-\frac{1}{2}}
\end{equation}
if $d_{S_{n,k}^Q}^2(X,Y) < \frac{3}{2}$.
\end{proof}

This gives us a computationally efficient procedure for averaging and extrapolating on the Stiefel manifold. We have already described how this
can be used to generalize accelerated gradient methods to the Stiefel manifold. We also propose
that this averaging and extrapolation procedure could potentially be a building block in other novel optimization algorithms on the manifold.

\subsection{Gradient Restart Scheme}
We can use the idea of the previous subsection to generalize the gradient restart scheme to the Stiefel manifold. Recall that the gradient restart scheme
restarts iteration \eqref{accel_iter} whenever 
$$\nabla f(Y_{t-1})\cdot (X_t - X_{t-1}) > 0.$$
We begin by noting that $X_t = Y_{t-1} - \gamma_{t-1}\nabla f(Y_{t-1})$ and so we can rewrite this condition as
\begin{equation}
    -\gamma_{t-1}\|\nabla f(Y_{t-1})\|_2^2 + \nabla f(Y_{t-1})\cdot (Y_{t-1} - X_{t-1}) > 0.
\end{equation}
Now it is clear that on the manifold $\|\nabla f(Y_{t-1})\|_2^2$ should become $\|\nabla f(Y_{t-1})\|_{g*}^2$. 
\hl{Here we are viewing
$\nabla f(Y_{t-1})$ as an element of the dual tangent space which it naturally is an element of, not as the Riemannian gradient which is obtained by raising the indices.} The tricky part is
generalizing $\nabla f(Y_{t-1})\cdot (Y_{t-1} - X_{t-1})$. What we propose is to solve for a $V\in (S_{Y_{t-1}})^*$ such that
\begin{equation} \label{solve_for_V}
    X_{t-1} = R(Y_{t-1}, \phi_g(V)).
\end{equation}
This element $V$ then serves as $X_{t-1} - Y_{t-1}$ and the analogue of the gradient restart condition becomes
\begin{equation}
    -\gamma_{t-1}\|\nabla f(Y_{t-1})\|_{g*}^2 - \langle\nabla f(Y_{t-1}), V\rangle_{g*} > 0.
\end{equation}
As in the previous subsection, we see that equation \eqref{solve_for_V} can be efficiently solved for $V$ if the retraction
we are using is $R_1$ (the Cayley retraction introduced in \cite{wen2013feasible}).

\subsection{Accelerated Gradient Descent on the Stiefel Manifold}
We now put together all of the the ideas presented in this section to obtain two versions of accelerated gradient descent
on the Stiefel manifold; the function restart variant, presented in \hl{algorithm} \ref{function_restart}, and the gradient restart scheme,
presented in algorithm \ref{gradient_restart}. In the next section we will present numerical results which demonstrate empirically that our methods
achieve the desired iteration complexity, i.e. that the number of iterations scales as $O(\sqrt{\kappa})$, 
where $\kappa$ is the condition number of the objective function.

\begin{algorithm}[!t]
\caption{Accelerated Gradient Descent with Function Restart Scheme}
\KwData{$f$ a smooth function, $\epsilon$ a tolerance, $\gamma_0$ an initial step size, $c_R$ a small restart parameter, $\lambda_d$, $c_L$ parameters required for the line search}
\KwResult{A point $X_t$ such that $\|\nabla f(X_t)\|_{g^*} < \epsilon$}
$X_0 \gets \text{initial point}$\\
$Y_0 \gets X_0$, $t \gets 0$, $k \gets 0$\\
\While {$\|\nabla f(X_t)\|_{g^*} \geq \epsilon$}{
    $X_{t+1} \gets R_1(Y_t, \phi_g(-\gamma_t\nabla f(Y_t)))$, perform a (two-sided) line search to ensure the Armijo condition $f(X_{t+1}) \leq f(Y_t) - \frac{1}{2}\gamma_t\|\nabla f(Y_t)\|_{g^*}^2$ is satisfied:\\ 
    \While {$f(X_{t+1}) < f(Y_t) - c_L\gamma_t\|\nabla f(Y_t)\|_{g^*}^2$}{
        $\gamma_t \gets \lambda_d\gamma_t$ \\
        $X_{t+1} \gets R_1(Y_t, \phi_g(-\gamma_t\nabla f(Y_t)))$
    }
    \While {$f(X_{t+1}) > f(Y_t) - \frac{1}{2}\gamma_t\|\nabla f(Y_t)\|_{g^*}^2$}{
        $\gamma_t \gets \gamma_t / \lambda_d$\\
        $X_{t+1} \gets R_1(Y_t, \phi_g(-\gamma_t\nabla f(Y_t)))$
    }
    \eIf {$f(X_{t+1}) > f(X_t) - c_R\gamma_t\|\nabla f(Y_t)\|_{g^*}^2$ (Restart Condition)} {
        $X_{t+1} \gets X_t$, $Y_t \gets X_{t+1}$, $k \gets 0$ (restart)\\
    }{
        $V_t \gets 2X_{t+1}(I + X_{t+1}^TX_t)^{-1}$\\
        $V_t \gets V_t - \frac{1}{2}X_t(V_t^TX_t + X_t^TV_t)$ (project onto the dual tangent space)\\
        $Y_{t+1} \gets R_1(X_t, (1 + \frac{k}{k+3})\phi_g(V_t))$ (apply momentum)\\
        $k\gets k+1$\\
    }
    $t\gets t+1$\\
    $\gamma_{t+1} = \gamma_t$\\
}\label{function_restart}
\end{algorithm}
\begin{algorithm}[!t]
\caption{Accelerated Gradient Descent with Gradient Restart Scheme}
\KwData{$f$ a smooth function, $\epsilon$ a tolerance, $\gamma_0$ an initial step size, $\lambda_d$, $c_L$ parameters required for the line search}
\KwResult{A point $X_t$ such that $\|\nabla f(X_t)\|_{g^*} < \epsilon$}
$X_0 \gets \text{initial point}$\\
$Y_0 \gets X_0$, $t \gets 0$, $k \gets 0$\\
\While {$\|\nabla f(X_t)\|_{g^*} \geq \epsilon$}{
    $X_{t+1} \gets R_1(Y_t, \phi_g(-\gamma_t\nabla f(Y_t)))$, perform a (two-sided) line search to ensure the Armijo condition $f(X_{t+1}) \leq f(Y_t) - \frac{1}{2}\gamma_t\|\nabla f(Y_t)\|_{g^*}^2$ is satisfied:\\ 
    \While {$f(X_{t+1}) < f(Y_t) - c_L\gamma_t\|\nabla f(Y_t)\|_{g^*}^2$}{
        $\gamma_t \gets \lambda_d\gamma_t$\\
        $X_{t+1} \gets R_1(Y_t, \phi_g(-\gamma_t\nabla f(Y_t)))$
    }
    \While {$f(X_{t+1}) > f(Y_t) - \frac{1}{2}\gamma_t\|\nabla f(Y_t)\|_{g^*}^2$}{
        $\gamma_t \gets \gamma_t / \lambda_d$\\
        $X_{t+1} \gets R_1(Y_t, \phi_g(-\gamma_t\nabla f(Y_t)))$
    }
    $W_t \gets 2X_t(I + X_t^TY_t)^{-1}$\\
    $W_t \gets W_t - \frac{1}{2}X_t(W_t^TX_t + X_t^TW_t)$ (projection onto the dual tangent space)\\
    \eIf {$\langle\nabla f(Y_t), W_t\rangle_{g*} < -\gamma_t\|\nabla f(Y_t)\|_{g^*}^2$ (Restart Condition)} {
        $X_{t+1} \gets X_t$, $Y_t \gets X_{t+1}$, $k \gets 0$ (restart)\\
    }{
        $V_t \gets 2X_{t+1}(I + X_{t+1}^TX_t)^{-1}$\\
        $V_t \gets V_t - \frac{1}{2}X_t(V_t^TX_t + X_t^TV_t)$ (projection onto the dual tangent space)\\
        $Y_{t+1} \gets R_1(X_t, (1 + \frac{k}{k+3})\phi_g(V_t))$ (apply momentum)\\
        $k\gets k+1$\\
    }
    $t\gets t+1$\\
    $\gamma_{t+1} = \gamma_t$\\
}\label{gradient_restart}
\end{algorithm}
\section{Numerical Results}
In this section, we provide the results of numerical experiments which test the convergence properties and robustness of the two algorithms described in the
previous section. We test the algorithms on a sequence of eigenvector calculations with increasing condition numbers. This allows us to investigate how the
iteration count scales with the condition number of the problem. The reason we do eigenvector calculations is that the condition number of the corresponding
objectives can be evaluated with relative ease.

\subsection{Single Eigenvector Calculations}
We begin by testing our algorithms on the sphere (which is a special case of the Stiefel manifold $S_{n,k}$ with $k = 1$). The problem we solve is the eigenvector calculation
\begin{equation}\label{numerical_eigenvector_opt}
    \argmin_{X\in S^n} \frac{1}{2}X^TAX,
\end{equation}
where $A$ is a symmetric matrix. The solution to this problem is the eigenvector corresponding to the smallest eigenvalue of $A$.

In order to evaluate the performance of our algorithm, we must investigate how the number of iterations scales with the condition number of \eqref{numerical_eigenvector_opt} (not to 
be confused with the condition number of $A$). A trivial calculation shows that this condition number is given by

\begin{equation}
    \kappa(Hf(v^*)) = \frac{\lambda_n - \lambda_1}{\lambda_2 - \lambda_1},
\end{equation}
where $\lambda_1,...,\lambda_n$ are the eigenvalues of $A$.

We choose a random initial point drawn uniformly at random on the sphere and apply both versions of our accelerated gradient descent
algorithm to optimize the objective \eqref{numerical_eigenvector_opt}. \hl{We take the matrix $A$ to be a diagonal matrix with diagonal entries (and eigenvalues) $d_{ii} = \lambda_i = i$.} Our stopping criterion is based on the relative gradient norm, i.e.
we stop when $\|\nabla f(X_n)\|_{g^*} \leq \epsilon \|\nabla f(X_0)\|_{g^*}$, with tolerance $\epsilon = 10^{-10}$. 
\hl{For both algorithms} \ref{function_restart} and \ref{gradient_restart} \hl{the initial step size is taken to be $\gamma_0 = 0.1$ and the line search parameters are taken as $\lambda_d = 1.7$ and $c_L = 0.7$. The restart parameter for algorithm} \ref{function_restart} \hl{is taken as $c_R = 0.01$. We remark that the performance of the algorithm is not particularly sensitive to these parameter values.}

We run this experiment for $21$ values of $n$ evenly spaced in log-space from $10^2$ to $10^4$ and plot the logarithm of the number of iteration
vs the logarithm of the condition number in figure \ref{scheme_test_2}. To reduce random fluctuations, we take the average of the logarithm
of the number of iterations over $50$ trials for each $n$. We also compare our results with a gradient descent scheme (using
 the same line search to determine the step size as our methods) and give the coefficients of a log-linear fit to the iteration data.

We see that our method empirically achieves the desired convergence behavior. Indeed, we can see that the number of iterations scales
slightly better than with the square root of the condition number, 
whereas the number of iterations for gradient descent scales about with the condition number. In addition, our accelerated method significantly
outperforms the gradient descent method even for relatively small condition numbers.
\begin{figure}[ht!]
\centering
\makebox[\textwidth][c]{\includegraphics[width=200mm]{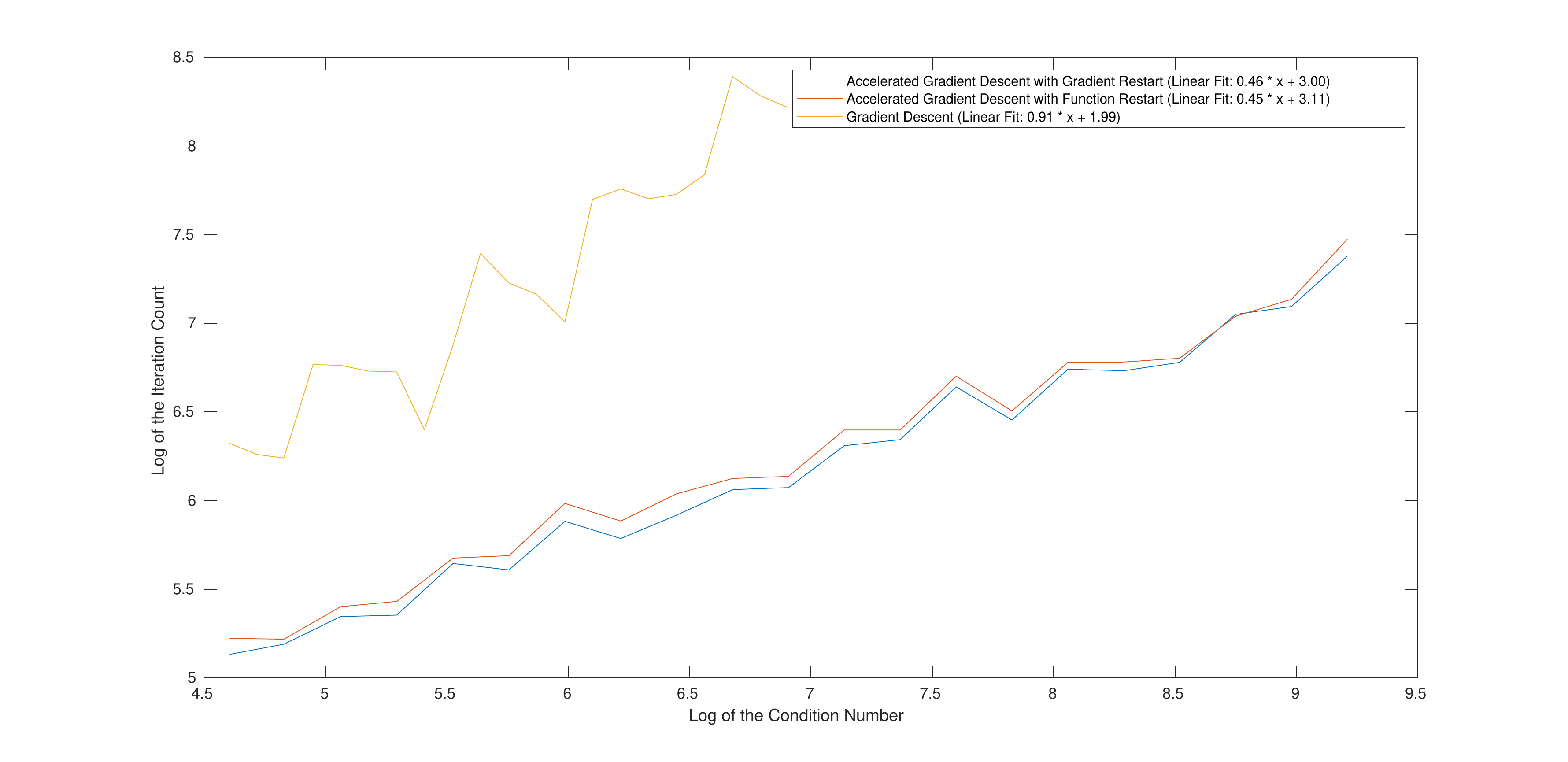}}
\caption{Iteration Count vs Condition Number (Sphere). We clearly see that the accelerated schemes exhibit better scaling than
simple gradient descent. The logarithm above is a natural logarithm (i.e. base $e$).}\label{scheme_test_2}
\end{figure}

\subsection{Multiple Eigenvector Calculations}
We now test our algorithms on the Stiefel manifold $S_{n,k}$ with $k > 1$. The problem we consider is that of calculating the smallest $k$ eigenvectors of a
symmetric linear operator $A$ by minimizing the Brockett cost
\begin{equation} \label{multiple_eigenvector_problem}
    \argmin_{X\in S_{n,k}} \frac{1}{2}\displaystyle\sum_{i = 1}^k \alpha_k \langle X_k, AX_k\rangle,
\end{equation}
where $X_k$ denotes the $k$-th column of $X$ and $0 < \alpha_1 < \alpha_2 < ... < \alpha_k$ are coefficients which force the minimizer to consist of
eigenvectors of $A$ rather than eigenvectors up to an orthogonal transformation.

As before, we want to investigate how the number of iterations depends upon the condition number of \eqref{multiple_eigenvector_problem}. A
simple calculation shows that the condition number of the Brockett cost \eqref{multiple_eigenvector_problem} is 

\begin{equation}\label{condition_number_calc_eigenvectors}
    \kappa(Hf(X^*)) = \frac{\alpha_k(\lambda_n - \lambda_1)}{\min\{\alpha_1(\lambda_{k+1} - \lambda_k), \min_{i < k} (\lambda_{k-i+1} - \lambda_{k-i})(\alpha_{i+1} - \alpha_i)\}}.
\end{equation}

We briefly note that if one knew the eigenvalues $\lambda_1,...,\lambda_n$, then the coefficients $\alpha_1,...,\alpha_n$ which minimize the
condition number \eqref{condition_number_calc_eigenvectors} produce a minimial condition number of

\begin{equation}\label{optimal_condition_number}
    \kappa(Hf(X^*))_{opt} = (\lambda_n - \lambda_1)\left(\displaystyle\sum_{i = 1}^k \frac{1}{\lambda_{i+1} - \lambda_i}\right).
\end{equation}
\hl{Indeed, this minimum condition number is achieved for}
\begin{equation}\label{optimal_alpha}
    \alpha_i = \displaystyle\sum_{j=0}^{i-1} \frac{1}{\lambda_{k-i+1} - \lambda_{k-i}},
\end{equation}
\hl{or any multiple of this choice.}

As with the calculations on the sphere, we choose a random initial point on $S_{n,k}$ and utilize the same relative gradient norm
stopping criterion with a tolerance $\epsilon = 10^{-10}$. \hl{We again let $A$ be a diagonal matrix with $d_{ii} = \lambda_i = i$. We take for the weights $\alpha_i$ the condition number minimizing choice corresponding to these eigenvalues, which is simply $\alpha_i = i$. We use the same value for the initial step size $\gamma_0$ and the line search and restart parameters $\lambda_d, c_L, c_R$ as for the single eigenvector calculations. We remark that as before the performance of the algorithm is not particularly sensitive to these parameter values.}

As before, we run this experiment for $21$ values of $n$ evenly spaced in log-space from $10^2$ to $10^4$ and plot the logarithm of the number of iteration
vs the logarithm of the condition number in figure \ref{scheme_test_10_2}. To reduce random fluctuations, we take the average of the logarithm
of the number of iterations over $50$ trials for each $n$. We also compare our results with a gradient descent scheme (using
 the same line search to determine the step size as our methods) and give the coefficients of a log-linear fit to the iteration data.

We again see that the gradient restart scheme empirically achieves the
desired convergence behavior. Indeed, we see that the number of iterations scales
better than with the $\sqrt{\kappa}$, but that gradient descent achieves a scaling close to $\kappa$. As before, our methods also
significantly outperform gradient descent for all problems tested.

\begin{figure}[ht!]
\centering
\makebox[\textwidth][c]{\includegraphics[width=200mm]{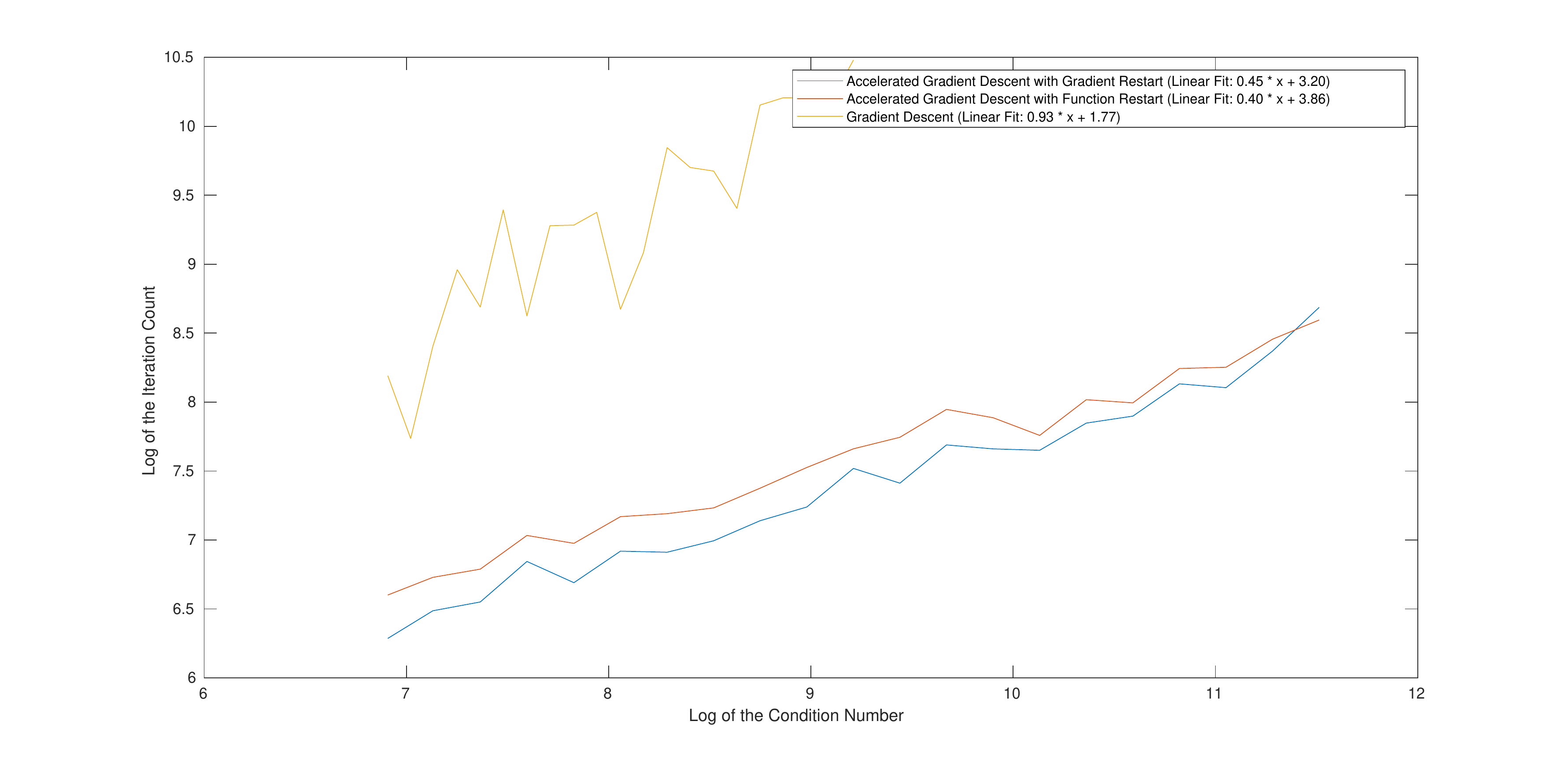}}
\caption{Iteration Count vs Condition Number ($k = 10$). We see that the accelerated method scales much better than ordinary gradient descent. The logarithm above is a natural logarithm (i.e. base $e$).}\label{scheme_test_10_2}
\end{figure}

\subsection{Comparison with State-of-the-Art Quasi-Newton Methods}
In this section, we compare our algorithm with an existing state of the art method.
We only consider our function restart scheme since it performed better than the gradient restart scheme in our previous tests. 
We compare with the state of the art L-RBFGS quasi-Newton method implemented in the ROTLIB library \cite{huang2018roptlib} (see also
\cite{huang2015broyden}).

We run both methods against each other on an ill-conditioned Brockett cost function \eqref{multiple_eigenvector_problem}, 
\hl{where the matrix $A$ is taken to be diagonal with spectrum $\{\frac{k^2}{n}\}_{k = 1,...,n}$ 
and the weights are $\alpha_i = i$ (note that this is not the optimal choice of $\alpha_i$ given in equation }\eqref{optimal_alpha}, \hl{however, this is not particularly important as the main point is that this objective is ill-conditioned).} In order to obtain 
reliable results, we test both methods with the same objective and the same random initial points. We repeat each test $10$ times
and record the average number of iterations, number of gradient evaluations, number of function evaluations, and total computational
time. We use a relative gradient norm stopping condition, i.e.
we stop when $\|\nabla f(X_n)\|_{g^*} \leq \epsilon \|\nabla f(X_0)\|_{g^*}$, with tolerance $\epsilon = 10^{-9}$ (we use the
slightly larger tolerance of $10^{-9}$ because the line search implemented in ROPTLIB failed when optimizing further). \hl{The parameters we use for the function restart scheme} \ref{function_restart} \hl{are initial step size $\gamma_0 = 0.1$, line search parameters $\lambda_d = 1.7$ and $c_L = 0.9$, and restart parameters $c_R = 0.01$.}

In our tests we consider two different values of $n$ and $k$. First, we test with $n = 1000$, $k = 10$, whose results are shown in 
table \ref{small_comparison}. Then we test with the larger values $n = 2000$, $k = 20$. The results of this test are shown in table
\ref{larger_comparison}. We see that our method outperforms the method in \cite{huang2015broyden}, using about a third
as many gradient evaluations, about the same number of functions evaluations, and less than half the time as the quasi-newton method
for the larger problem $n = 2000$, $k = 20$.

\begin{table}[h!]
    \centering
    \begin{tabular}{|c|c|c|c|c|}
    \hline
         Method & Iterations & Function Evals & Gradient Evals & Time (s) \\
    \hline
         Quasi-Newton & 31027.4 & 32528.9 & 31028.4 & 26.3 \\
    \hline 
         Accelerated Gradient & 17266.2 & 43513.4 & 17267.2 & 17.8 \\
    \hline
    \end{tabular}
    \caption{Comparison of the methods on a problem of size $n = 1000$, $k = 10$.}\label{small_comparison}
\end{table}

\begin{table}[h!]
    \centering
    \begin{tabular}{|c|c|c|c|c|}
    \hline
         Method & Iterations & Function Evals & Gradient Evals & Time (s) \\
    \hline
         Quasi-Newton & 84767.2 & 89013.8 & 84768.2 & 169.4 \\
    \hline 
         Accelerated Gradient & 28758.8 & 93747.8 & 28759.8 & 64.5 \\
    \hline
    \end{tabular}
    \caption{Comparison of the methods on a problem of size $n = 2000$, $k = 20$.}\label{larger_comparison}
\end{table}

\section{Conclusion} 
In this paper, we developed novel accelerated first-order optimization methods designed to handle orthogonality constraints. The algorithms developed 
are a generalization of Nesterov's gradient descent to the Stiefel manifold. In the process, we constructed an efficient way of averaging and extrapolating points on the
manifold, which we believe can be useful in developing other novel optimization algorithms. Numerical experiments indicate that our methods 
not only achieve the desired
scaling with the condition number of the problem, but also outperform state of the art \hl{quasi-Newton} methods on some large, ill-conditioned problems.

We would also like to note that although the algorithms we have constructed make explicit use of formulas specific to the Stiefel manifold,
we believe the ideas presented in this paper can be generalized to other manifolds as well. This would depend upon generalizing the
momentum step \eqref{momentum_step} efficiently to the manifold of interest. In fact, for quotients of the Stiefel manifold, for example
the Grassmann manifold, the algorithms developed in this paper are already immediately applicable.

\section{Acknowledgements}
We are grateful to Russel Caflisch, Stanley Osher, and Vidvuds Ozolins for suggesting this project and providing continued guidance. This work was supported by
AFOSR grant FA9550-15-1-0073.

\printbibliography

\appendix
\section{Proof of Theorem \ref{accelerated_convergence}}\label{proof-accelerated-convergence}
\begin{proof}[Proof of Theorem \ref{accelerated_convergence}]
Consider the Lyapunov function
\begin{equation}
    J_t = \gamma_tq_t(q_t + 2)(f(x_t) - f(x^*)) + \frac{1}{2}\|2(y_t - x^*) + q_t(y_t - x_t)\|_2^2.
\end{equation}
We will show that $J_{t+1} \leq J_t$ which proves the theorem since $\gamma_tq_t(q_t + 2)(f(x_t) - f(x^*)) \leq J_t$
and $J_0 = 2\|y_0 - x^*\|_2^2 = 2\|x_0 - x^*\|_2^2$. To this end, we denote
\begin{equation}
    J^1_t = \gamma_tq_t(q_t + 2)(f(x_t) - f(x^*))
\end{equation}
and
\begin{equation}
    J^2_t = \frac{1}{2}\|2(y_t - x^*) + q_t(y_t - x_t)\|_2^2.
\end{equation}
Then we see that
\begin{equation}
\begin{split}
    &J^1_{t+1} - J^1_t = \gamma_tq_t(q_t + 2)(f(x_{t+1}) - f(x_t)) + \\(\gamma_{t+1}&q_{t+1}(q_{t+1} + 2) - \gamma_tq_t(q_t + 2))(f(x_{t+1}) - f(x^*)).
\end{split}
\end{equation}
Since by assumption $\gamma_{t+1} \leq \gamma_t$ and $q_{t+1}(q_{t+1} + 2) = (q_{t+1} + 1)^2 - 1 \leq (q_t + 2)^2$, we see that
$\gamma_{t+1}q_{t+1}(q_{t+1} + 2) \leq \gamma_{t}(q_{t} + 2)^2$, and the bottom line in the above equation is bounded by
\begin{equation}
    (\gamma_{t}(q_{t} + 2)^2 - \gamma_tq_t(q_t + 2))(f(x_{t+1}) - f(x^*)) = 2\gamma_t(q_t + 2)(f(x_{t+1}) - f(x^*)).
\end{equation}
Thus we see that
\begin{equation}
    J^1_{t+1} - J^1_t \leq \gamma_t(q_t + 2)[2(f(x_{t+1}) - f(x^*)) + q_t(f(x_{t+1}) - f(x_t))].
\end{equation}
The step sizes $\gamma_t$ are chosen so that $f(x_{t+1}) - f(y_t) \leq -(1/2)\gamma_t\|\nabla f(y_t)\|_2^2$, and so we can rewrite the above to
obtain
\begin{equation}
\begin{split}
    J^1_{t+1} - J^1_t& \leq \gamma_t(q_t + 2)[2(f(y_t) - f(x^*)) + q_t(f(y_t) - f(x_t))]\\
    &-\frac{(\gamma_t(q_t + 2))^2}{2}\|\nabla f(y_t)\|_2^2.
\end{split}
\end{equation}
The convexity of $f$ implies that $f(y_t) - f(x^*) \leq \nabla f(y_t)\cdot(y_t - x^*)$ and $f(y_t) - f(x_t) \leq \nabla f(y_t)\cdot(y_t - x_t)$, so we get
\begin{equation}
\begin{split}
    J^1_{t+1} - J^1_t& \leq \gamma_n(q_t + 2)\nabla f(y_t)\cdot[2(y_t - x^*) + q_t(y_t - x_t)]\\
    &-\frac{(\gamma_t(q_t + 2))^2}{2}\|\nabla f(y_t)\|_2^2.
\end{split}
\end{equation}
Now we consider $J^2_{t+1} - J^2_t$. Note that $J^2_t = (1/2)\|s_t\|_2^2$ with $$s_t = 2(y_t - x^*) + q_t(y_t - x_t).$$
Thus 
\begin{equation}
J^2_{t+1} - J^2_t = (s_{t+1} - s_t)\cdot s_t + \frac{1}{2}\|(s_{t+1} - s_t)\|_2^2.
\end{equation}
Considering that 
\begin{equation}
    J^1_{t+1} - J^1_t \leq \gamma_t(q_t + 2)\nabla f(y_t)\cdot s_t-\frac{(\gamma_t(q_t + 2))^2}{2}\|\nabla f(y_t)\|_2^2,
\end{equation}
we will be done if we can show that $s_{t+1} - s_t = -\gamma_t(q_t + 2)\nabla f(y_t)$. To this end we compute
\begin{equation}\label{eq_836}
\begin{split}
    s_{t+1} - s_t& = 2(y_{t+1} - y_t) + q_t(y_{t+1} - y_t) - q_t(x_{t+1} - x_t) \\
    &+ (q_{t+1} - q_t)(y_{t+1} - x_{t+1}).
\end{split}
\end{equation}
Recalling the update formulas for the iterates $x_n$ and $y_n$, we see that $$y_{t+1} - x_{t+1} = \alpha_t(x_{t+1} - x_t)$$ and $$y_{t+1} - y_t = -\gamma_t\nabla f(y_t) + \alpha_t(x_{t+1} - x_t).$$ Thus equation \eqref{eq_836}
simplifies to
\begin{equation}
    s_{t+1} - s_t = -(q_t + 2)\gamma_t\nabla f(y_t) + (2\alpha_t + q_{t+1}\alpha_t - q_t)(x_{t+1} - x_t),
\end{equation}
which is equal to $-(q_t + 2)\gamma_t\nabla f(y_t)$ by our choice of $\alpha_t$.
\end{proof}

\section{Derivation of equation \ref{stiefel_exact_geodesics}}\label{derivation-geodesics}
We begin from equation 2.42 in \cite{geometry}, which, using their notation is
\begin{equation}
    Y(t) = Qe^{Xt}I_{n,p},
\end{equation}
where
\begin{equation}
    X = \begin{bmatrix}
A & -B^T \\
B & 0_{(n-p)\times (n-p)}
\end{bmatrix}
\end{equation}
for some $A\in\mathbb{R}^{p\times p}$ skew-symmetrix and some $B\in \mathbb{R}^{n-p\times p}$.
We rewrite this using the fact that $Q$ is an orthonormal matrix as
\begin{equation}
    Y(t) = \exp{(tQXQ^T)}QI_{n,p}.
\end{equation}

Now, translating into the notation we have used, we see that $Q = [X,X^\prime]$, $Y(t) = X(t)$, and
$QXI_{n,p} = V$ (here $X^\prime$ is a basis for the orthogonal complement of $X$). Equation \ref{stiefel_exact_geodesics} now boils down to checking that (in our notation)
\begin{equation}
    [X,X^\prime]\begin{bmatrix}
A & -B^T \\
B & 0_{(n-p)\times (n-p)}
\end{bmatrix}[X,X^\prime]^T = VX^T - XV^T + XV^TXX^T
\end{equation}
where $V = XA+X^\prime B$ and $X^TX^\prime = 0$ since $X^\prime$ is an orthogonal complement of $X$.
\end{document}